\documentclass[12pt,reqno]{amsart}

\usepackage{verbatim, amssymb, enumitem, mathtools}

\usepackage[breaklinks=true,colorlinks=true,linkcolor=blue,citecolor=red,urlcolor=blue,psdextra,pdfencoding=auto]{hyperref}
\allowdisplaybreaks

\renewcommand \a{\alpha}

\newcommand \la{\lambda}

\newcommand \br{\mathbb{R}}
\newcommand \bc{\mathbb{C}}

\newcommand \Z{\mathbb{Z}}
\newcommand \F{\mathbb{F}}

\newcommand \bh{\mathbb{H}}
\newcommand \bo{\mathbb{O}}

\newcommand \rk{\operatorname{rk}}

\newcommand \Span{\operatorname{Span}}
\newcommand \Tr{\operatorname{Tr}}

\newcommand \SO{\mathrm{SO}}
\newcommand \Sp{\mathrm{Sp}}
\newcommand \SU{\mathrm{SU}}

\newcommand \Ff{\mathrm{F}_4}
\newcommand \Spin{\mathrm{Spin}}

\newcommand \cB{\mathcal{B}}
\newcommand \cC{\mathcal{C}}

\newcommand \cV{\mathcal{V}}

\newcommand \cK{\mathcal{K}}

\newcommand \sK{\mathsf{K}}
\newcommand \sS{\mathsf{S}}

\newcommand \sT{\mathsf{T}}

\newcommand\ag{\mathfrak a}

\newcommand\g{\mathfrak g}

\newcommand\h{\mathfrak h}

\newcommand\m{\mathfrak m}
\newcommand \so{\mathfrak{so}}
\newcommand \spg{\mathfrak{sp}}

\newcommand \ug{\mathfrak{u}}
\newcommand \su{\mathfrak{su}}

\newcommand \ir{\mathrm{i}}
\newcommand \jr{\mathrm{j}}
\newcommand \kr{\mathrm{k}}

\newcommand \Sym{\operatorname{Sym}}

\newcommand \diag{\operatorname{diag}}

\newcommand \<{\langle}
\renewcommand \>{\rangle}
\newcommand \ip{\<\cdot,\cdot\>}

\newtheorem{theorem}{Theorem}
\newtheorem*{theorem*}{Theorem}

\newtheorem*{corollary*}{Corollary}
\newtheorem*{conj*}{Conjecture}
\newtheorem{lemma}{Lemma}
\newtheorem{proposition}{Proposition}
\newtheorem*{prop*}{Proposition}

\theoremstyle{definition}

\newtheorem*{definition*}{Definition}

\theoremstyle{remark}
\newtheorem{remark}{Remark}

\newtheorem*{notation*}{Notation}
\newtheorem*{algorithm*}{Algorithm}
\newtheorem*{example*}{Example}

\begin{document}
	
	\title[Quadratic Killing tensors on classical Lie groups are decomposable]{Quadratic Killing tensors on classical Lie groups are decomposable}
	
	\author{Vladimir Matveev}
	\address{Fakult\"{a}t f\"{u}r Mathematik und Informatik\\ Friedrich-Schiller-Universit\"{a}t\\ 07737 Jena, Germany}
	\email{vladimir.matveev@uni-jena.de}
	
	\author{An Ky Nguyen}
	\address{Department of Mathematical and Physical Sciences\\ La Trobe University\\ Melbourne, Victoria 3086, Australia}
	\email{AnKy.Nguyen@latrobe.edu.au}
	
	\author{Yuri Nikolayevsky}
	\address{Department of Mathematical and Physical Sciences\\ La Trobe University\\ Melbourne, Victoria 3086, Australia}
	\email{y.nikolayevsky@latrobe.edu.au}
	
	\thanks{The first and the third named authors were partially supported by ARC Discovery grant DP210100951. The first named author was partially supported by the DFG project 529233771.}
	
	\subjclass[2020]{53C35, 53B20, 53D25}
	% 53B20(1973-now)Local Riemannian geometry
	% 53D25(2000-now)Geodesic flows in symplectic geometry and contact geometry
	% 53C35(1973-now)Differential geometry of symmetric spaces
	% 22E46(1980-now)Semisimple Lie groups and their representations
	% 70H06(2000-now)Completely integrable systems and methods of integration for problems in Hamiltonian and Lagrangian mechanics
	
	\keywords{quadratic Killing tensor, decomposability, classical Lie groups}

	\begin{abstract}
		A Killing tensor field on a Riemannian manifold $(M,g)$ is a covariant symmetric tensor field whose contraction with the velocity vector along a geodesic produces a homogeneous polynomial first integral of the geodesic flow. Such a tensor is called \emph{decomposable} if it lies in the subalgebra generated by Killing vector fields; equivalently, the corresponding polynomial integral is then a polynomial in the linear integrals coming from infinitesimal isometries. On spaces of constant sectional curvature and on the complex projective space, every Killing tensor field is decomposable. By contrast, the quaternionic projective spaces and the Cayley projective plane admit indecomposable quadratic Killing tensor fields. We prove that every quadratic Killing tensor field on the compact classical Lie groups $\SO(n)$, $\Spin(n)$, $\SU(n)$ and $\Sp(n)$, equipped with a bi-invariant Riemannian metric, is decomposable; equivalently, every quadratic first integral of the geodesic flow on these groups is a quadratic polynomial in the linear first integrals. 
	\end{abstract}

	\maketitle
	
	\section{Introduction}
	\label{s:intro}
	
	One of the most effective ways to probe the geometry of a Riemannian manifold is through the conserved quantities of its geodesic flow. Killing vector fields give the first and most classical such conservation laws: they are infinitesimal isometries, and by Noether's principle they produce first integrals which are linear in the velocities.  Killing tensor fields are the higher-degree analogue. They produce polynomial first integrals of the geodesic flow and therefore detect higher-order symmetries of the metric, symmetries which need not come from isometries themselves.
	
	This distinction is central both in differential geometry and in geometric mechanics. A quadratic Killing tensor may either be built from Killing vector fields, in which case the associated quadratic integral is algebraically forced by ordinary symmetries, or it may be irreducible, in which case it represents a genuinely hidden symmetry. The latter phenomenon is familiar in mathematical physics: the Carter constant in Kerr geometry, and the Killing tensor interpretation developed by Walker and Penrose, are classical examples showing that quadratic first integrals can encode structure invisible to the isometry algebra alone~\cite{Car,WP}. The problem studied in this paper is the corresponding structural question in Riemannian symmetric geometry: for which symmetric spaces do all quadratic hidden symmetries in fact reduce to visible ones?
	
	Let $K=K(x)_{i_1\dots i_d}$ be a covariant symmetric tensor field of rank $d\geq 1$ on a Riemannian manifold $(M,g)$. It is called a \emph{Killing tensor field} if it satisfies the Killing equation
	\begin{equation*}
		K_{(i_1\dots i_d,j)}=0,
	\end{equation*}
	where the comma denotes covariant differentiation and the parentheses denote symmetrisation over all indices. This equation is equivalent to the assertion that
	\[
	\xi\in T_xM
	\longmapsto
	K(x)_{i_1\dots i_d}\xi^{i_1}\cdots \xi^{i_d}
	\]
	is a homogeneous polynomial first integral of the geodesic flow. In other words, for every naturally parameterised geodesic $s\mapsto \gamma(s)$, the quantity
	\[
	K(\gamma(s))_{i_1\dots i_d}
	\dot\gamma(s)^{i_1}\cdots \dot\gamma(s)^{i_d}
	\]
	is constant. When $d=1$, one recovers Killing vector fields after raising an index by the metric. For $d\geq 2$, however, there is no comparably simple geometric description in general.
	
	The space of all Killing tensor fields on $(M,g)$ is an associative, commutative, graded algebra $\sK(M)$ under symmetric tensor product. The rank-one elements generate a graded subalgebra $\sS(M)\subseteq \sK(M)$. The elements of $\sS(M)$ are called \emph{decomposable}.  Equivalently, a Killing tensor is decomposable if the corresponding polynomial integral of the geodesic flow is a polynomial in the linear integrals arising from Killing vector fields. Thus decomposable tensors reflect no new conservation laws beyond those already imposed by the isometry group, while indecomposable tensors represent genuinely higher-order hidden symmetries.
	
	In general, Killing tensor fields of rank at least two need not be decomposable, even modulo the metric tensor. The classical two-dimensional Liouville metrics already exhibit non-trivial quadratic Killing tensors although their isometry groups are generically trivial. At the opposite extreme, strong symmetry often forces decomposability. Every Killing tensor field on a Riemannian space of constant sectional curvature is decomposable~\cite{Tho,ST1,Tak}. The same is true on complex projective space with its Fubini--Study metric~\cite[Corollary~5]{East}, \cite[Theorem~2.2]{ST2}. These positive results suggest that, in sufficiently rigid geometries, all polynomial first integrals might be generated by infinitesimal isometries.
	
	Symmetric spaces provide the natural testing ground for this expectation. They have large isometry groups and strong curvature identities, yet their geometry is rich enough to allow non-obvious higher-order phenomena. Motivated by the known positive cases and by the role of symmetric spaces in finite-dimensional integrable systems, \cite[Question~3.9]{BMMT} asked whether every Killing tensor field on a symmetric space is decomposable (the question is originally attributed  to Milson and Fels).
	
	The answer is already subtle in rank one.  For spheres and complex projective spaces the answer is positive, as above. Somewhat unexpectedly, the remaining compact rank-one symmetric spaces behave differently at the quadratic level: Matveev and Nikolayevsky showed that the quaternionic projective spaces $\bh P^n=\Sp(n+1)/(\Sp(n)\Sp(1))$, $n\geq 3$, and the Cayley projective plane $\bo P^2=\Ff/\Spin(9)$ admit quadratic Killing tensor fields which are not quadratic forms in Killing vector fields~\cite{MN1}. Very recently, Eastwood and Leistner proved the existence of indecomposable quadratic Killing tensor fields on the exceptional symmetric space $\mathrm{E}_6/\Ff$ of rank $2$~\cite[Theorem~16]{EL}. For all these spaces, the space of quadratic Killing tensor fields is spanned by the decomposable and the explicitly constructed indecomposable ones (\cite[Theorem~4]{MN2} and~\cite[Theorem~17]{EL}, respectively).
	
The higher-rank case is much less understood. The main result of this paper is that for compact, classical Lie groups, no such irreducible quadratic hidden symmetries exist.
	\begin{theorem} \label{t:SOSUSp}
		All quadratic Killing tensor fields on the classical Lie groups $\SO(n)$ \emph{(}and $\Spin(n)$\emph{)}, $\SU(n)$ and $\Sp(n)$, equipped with a bi-invariant metric, are decomposable, that is, are quadratic forms on the space of Killing vector fields.
	\end{theorem}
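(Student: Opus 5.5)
We regard $G$ with its bi-invariant metric as the symmetric space $(G\times G)/\Delta G$, with isometry algebra $\g\oplus\g$ (left- and right-invariant vector fields). The aim is to show that the space $\sK^2(G)$ of quadratic Killing tensors coincides with the image of $\Sym^2(\g\oplus\g)$. The first step is to reduce the question to representation theory at the identity. A quadratic Killing tensor is determined by a finite jet at one point. For a symmetric space, the standard prolongation of the Killing equation, with the Killing tensor written as $K(x)(\xi,\xi)$, shows that $K$ is determined by its values $K_e$ and $\nabla K_e$ at $e$, together with the relevant second-order data. Moreover, $\sK^2(G)$ is a finite-dimensional $G\times G$-module.

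Since $G$ is compact, $\sK^2(G)$ embeds $G\times G$-equivariantly into $C^\infty(G)\otimes \Sym^2\g^*$, using left trivialisation $T_xG\cong\g$. By Peter--Weyl and Frobenius reciprocity, an irreducible $V\boxtimes W$ of $G\times G$ occurs in this space only if $W\cong V^*$ up to twisting. Its multiplicity is then $\dim\mathrm{Hom}_G(V\otimes V^*\text{-part},\Sym^2\g)$, computed appropriately. Concretely, $K_e\in\Sym^2\g^*$, and the isotropy $\Delta G$ acts by the adjoint action. So every irreducible constituent $V\boxtimes V'$ of $\sK^2(G)$ must satisfy $\mathrm{Hom}_{\Delta G}(V\otimes V',\Sym^2\g)\ne0$.

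The second step is to characterise the Killing condition for a left-trivialised tensor, $K:G\to\Sym^2\g^*$. Using $\nabla_XY=\tfrac12[X,Y]$ for left-invariant fields, the Killing equation becomes
\[
(X\cdot K)(X,X)=0\quad\text{for all } X\in\g,
\]
where $X\cdot$ denotes differentiation along the left-invariant field $X$. Indeed, the connection term $K([X,X],X)$ vanishes. Restricting to a single isotypic component $V\otimes V^*$, with $K(x)=\phi(\rho(x))$ for a $G$-equivariant linear map $\phi$ and matrix coefficients of $\rho$, turns this into a purely algebraic condition. The condition reads $\Sym^3$-part of $(d\rho\otimes\phi)=0$, as a map $V\otimes V^*\to\Sym^3\g^*$. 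So each admissible $V$ yields a linear-algebra problem.

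The third step is to identify the decomposable part. $\Sym^2(\g\oplus\g)$ contributes $\Sym^2\g\boxtimes1$, $1\boxtimes\Sym^2\g$ and $\g\boxtimes\g$. Modulo Casimirs, the first two are constant in left trivialisation, up to $\mathrm{Ad}$, and the third gives the matrix coefficients of the adjoint representation. We must therefore show two things. First, the only $V$ with nonzero solutions are the trivial representation, $\g$, and the summands of $\Sym^2\g$ as they appear. Second, the solution multiplicities match those coming from $\Sym^2(\g\oplus\g)$.

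The main obstacle is the exclusion step. $\mathrm{Hom}_G(V\otimes V^*,\Sym^2\g)\ne0$ allows many $V$, namely all $V$ whose $V\otimes V^*$ contains a constituent of $\Sym^2\g$. For example, for $\su(n)$ this includes the standard representation $V=\bc^n$, since $\bc^n\otimes\bc^{n*}\supset\su(n)$, which lies in $\Sym^2\g$. I would attack this by using the third-order condition. Polarising $(X\cdot K)(X,X)=0$ and restricting $X$ to a maximal torus $\ag$ (a Cartan subalgebra) reduces it to a statement about weights. On a weight vector of weight $\mu$, $X$ acts by $\mu(X)$. The condition then forces $\mu(X)\,\phi_\mu(X,X)=0$ on $\ag$, plus root-string relations, so that $\phi$ is supported on weights of $V$ that are roots or zero.

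Running this through the classical root systems case by case ($A_{n-1}$, $B$, $C$, $D$) should show the following. Only $V$ whose weights relevant to $\phi$ lie in $\{0\}\cup\Delta$ survive, and the surviving solutions coincide with those from $\g\boxtimes\g$ and the constants. This is the step where the classical-type hypothesis enters, presumably via explicit matrix models such as $\SO(n)$ acting on $\br^n$. It is also where I expect the real work to lie. Low-rank cases and the passage from $\SO(n)$ to $\Spin(n)$ (which share the same Lie algebra locally) would be handled separately.
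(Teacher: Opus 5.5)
Your reformulation is correct as far as it goes. In a left-invariant frame the connection terms drop out of the symmetrised Killing operator. Peter--Weyl then splits $\sK^2(G)$ as $\bigoplus_V V^*\boxtimes\mathcal{S}_V$, where $\mathcal{S}_V$ is the space of linear maps $\Phi\colon\Sym^2\g\to V$ with $d\rho(X)\,\Phi(X\odot X)=0$ for all $X\in\g$. But the theorem \emph{is} the assertion that $\mathcal{S}_V=0$ unless $V$ is trivial, $V\cong\g$, or $V$ is a constituent of $\Sym^2\g$, with dimensions matching $\Sym^2(\g\oplus\g)$ modulo the metric. That is exactly the step you leave open, and the tools you indicate do not suffice, for three reasons.

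\emph{First}, the Frobenius-reciprocity filter is vacuous. $\Sym^2\g$ contains the invariant inner product, so every $V\boxtimes V^*$ passes it. The differential condition must therefore kill infinitely many $V$, which needs an argument uniform in $V$. (Your claim that only $W\cong V^*$ occurs is also false: $\Sym^2$ of left-invariant fields gives $1\boxtimes\Sym^2\g$, as you note yourself in the third step.)

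\emph{Second}, restricting $X$ to $\ag$ only yields $\Phi(\Sym^2\ag)\subset V_0$, the zero weight space. This constrains $\Phi$ on a tiny subspace of $\Sym^2\g$, and since $\Phi$ is not equivariant it cannot be propagated by conjugation. The ``root-string relations'' that would have to carry the argument are never specified.

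\emph{Third}, the outcome you predict is incompatible with the decomposable tensors themselves. $\Sym^2$ of right-invariant fields is not constant in the left trivialisation; it gives $K(x)(Y,Y)=q(\mathrm{Ad}_x Y)$. These are solutions whose $V$ runs over the constituents of $\Sym^2\g$, including the Cartan square of highest weight $2\theta$ ($\theta$ the highest root). Its weights do not lie in $\{0\}\cup\Delta$.

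For comparison, the paper never faces an infinite family of representations. Proposition~\ref{p:topslot2} reduces everything to the finite-dimensional space $\sT(\m)$ of tensors at one point. Even so, proving $\sT(\m)=\{K_Q\}$ takes the structural Lemmas~\ref{l:nosyz}, \ref{l:fourcommute} and~\ref{l:threecommute} (injectivity of $Q\mapsto K_Q$, vanishing on commuting configurations). It also takes an induction on $n$ through the totally geodesic subgroups $\Spin(n-1)$, $\SU(n-1)$, $\Sp(n-1)$, and computer verification for $\so(5)$, $\su(3)$, $\su(5)$, $\spg(3)$, $\spg(4)$, $\spg(5)$.

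Your outline has no counterpart to any of this. There is neither a uniform vanishing argument for $\mathcal{S}_V$, nor a reduction to finitely many $V$ followed by an actual computation. Such a reduction could come, for instance, from the finite jet at $e$ that determines $K$, which bounds which $V^*\otimes V'$ can embed $\Delta G$-equivariantly. As it stands, the proposal is a reformulation of the problem rather than a proof.
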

	
	Equivalently, every quadratic first integral of the geodesic flow on these groups is a quadratic polynomial in the linear first integrals generated by infinitesimal isometries. By~\cite[Theorem~2]{MN2}, the same conclusion holds for the corresponding non-compact dual symmetric spaces. 
	
We now describe the proof strategy. We start with Proposition~\ref{p:topslot2} which reduces the question of determining quadratic Killing tensors on a given symmetric space to a purely linear-algebraic one. Further in Subsection~\ref{ss:topslot} we prove several important structural results given in Lemma~\ref{l:nosyz}, Lemma~\ref{l:fourcommute} and Lemma~\ref{l:threecommute}; they hold for all compact irreducible symmetric spaces, not only for classical groups with bi-invariant metric. Using these results, together with the totally geodesic inclusions $\Spin(n-1)\subset\Spin(n), \, \SU(n-1)\subset\SU(n)$ and $\Sp(n-1)\subset\Sp(n)$, we give the induction proof of Theorem~\ref{t:SOSUSp} in Subsection~\ref{ss:induction}. For all three series of the groups, the induction step starts working from $n=6$. The cases $n \le 5$ have to be treated manually. The proof for these cases is given in Subsection~\ref{ss:sage} and is computer aided. We work with \texttt{SageMath} and use finite field arithmetic and sparse matrix packages. The corresponding worksheets are supplied as ancillary material.

	\section{Preliminaries}
	\label{s:prel}
	
	\subsection{Quadratic Killing tensor fields on symmetric spaces}
	\label{ss:topslot}
	
	For general theory of symmetric spaces we refer the reader to~\cite{Hel,Wolf}.
	
	Let $M=G/H$ be a simply connected, connected, irreducible Riemannian globally symmetric space, where $G$ is the connected component of the full isometry group of $M$, and $H$ is the isotropy subgroup at a point $o \in M$. We have the corresponding decomposition $\g = \h \oplus \m$ at the level of Lie algebras, where $\g$ and $\h$ are the Lie algebras of $G$ and $H$, respectively, and $\m$ is the $\h$-submodule of $\g$ orthogonal to $\h$ relative to the Killing form of $\g$. Then there is a standard identification of $\m$ with the tangent space $T_oM$. The curvature tensor of $M$ at $o$ is given by $R(X,Y)Z = -[[X,Y],Z]$. The restriction of the Killing form of $\g$ to $\h$ gives an inner product on $\h$; minus the restriction of the Killing form of $\g$ to $\m$ gives an inner product on $\m$. Both these inner products are $\h$-invariant, and we denote both of them by $\ip$. Up to scaling, the inner product induced by the metric of $M$ on $T_oM$ coincides with $\ip$.
	
	The following necessary and sufficient condition of decomposability of quadratic Killing tensors is obtained in~\cite[Theorem~3, Remark~2]{MN2}.
	
	\begin{proposition} \label{p:topslot2}
		Let $M= G/H$ be an irreducible, simply connected globally symmetric space. Denote by $\sT(\m)$ the linear space of (constant) tensors $K$ of type $(0,4)$ on $\m$ which satisfy the following equations, for all $X, Y, Z, P \in \m$:
		\begin{gather}
			K(X,Y,Z,P) = K(Y,X,Z,P) = K(X,Y,P,Z), \quad \sigma_{X,Y,Z} \big(K(X,Y,Z,P)\big)=0, \label{eq:K2c}\\
			K(X,X,P,R(X,P)P)=K(P,P,X,R(P,X)X), \label{eq:K21}\\
			K(X,X,R(X,P)P,R(X,P)P)=K(P,P,R(P,X)X,R(P,X)X), \label{eq:K22}
		\end{gather}
		where $\sigma_{X,Y,Z}$ denotes the sum over cyclic permutations of $(X,Y,Z)$.
		
		Every quadratic Killing tensor field on $M$ is decomposable if and only if every element $K \in \sT(\m)$ is decomposable in the sense that there exists a quadratic form $Q$ on $\h$ such that $K=K_Q$, where
		\begin{equation} \label{eq:KQ}
			K_Q(X,X,P,P) = Q([X,P],[X,P]),
		\end{equation}
		for all $X, P \in \m$.
	\end{proposition}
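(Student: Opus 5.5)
The plan is to use the standard prolongation of the Killing equation. This converts a quadratic Killing tensor field into a parallel section of a finite-rank vector bundle over $M$. On a globally symmetric space that section is then completely determined by linear-algebraic data at the single point $o$.

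\emph{Step 1: the prolonged system.} For a symmetric $2$-tensor $K$, the equation $K_{(ij,k)}=0$ prolongs, after two differentiations, to a closed first-order linear system. Its unknowns are $K$ itself, the first derivative $\nabla K$ (which lies in the kernel of symmetrisation), and one further component $L$ of the second derivative. This $L$ has exactly the algebraic symmetries of the Young diagram $(2,2)$: it is symmetric in each pair, and it satisfies the cyclic identity $\sigma_{X,Y,Z}L(X,Y,Z,P)=0$. These are the conditions \eqref{eq:K2c}. On a locally symmetric space we have $\nabla R=0$, so the connection on this prolongation bundle is $G$-invariant.

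\emph{Step 2: reduction to the top slot.} Quadratic Killing tensors correspond bijectively to parallel sections of the prolongation bundle. Since $M$ is simply connected, these in turn correspond to vectors at $o$ annihilated by the curvature of the prolongation connection. The grading of the prolonged system gives a filtration. The curvature conditions projected onto the top slot $L$ should be exactly \eqref{eq:K21} and \eqref{eq:K22}. These arise as the first and second integrability conditions, the second obtained by differentiating the first along the geodesic spanned by $X$ and $P$ and using $R(X,P)P$. The map ``Killing tensor $\mapsto$ top slot $L(o)$'' is therefore a linear map into $\sT(\m)$. One must then show two things:
\begin{enumerate}[label=(\roman*)]
\item this map is surjective onto $\sT(\m)$, i.e.\ every solution of \eqref{eq:K2c}--\eqref{eq:K22} extends to a full parallel section;
\item a Killing tensor with vanishing top slot is decomposable.
\end{enumerate}
For (ii), I expect such a tensor to satisfy a reduced prolonged system. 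On an irreducible symmetric space this system is solved by $g$ and by symmetric products of Killing vectors. In fact I would aim to show it is a constant multiple of $g$ plus elements of $\sS(M)$ of lower filtration; this part of the argument uses irreducibility.

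\emph{Step 3: identifying decomposable top slots.} Take a Killing tensor $\sum Q^{ab}\,\xi_a\odot\xi_b$, where $Q$ is a quadratic form on $\g$ and the $\xi_a$ are Killing vectors. Its value at $o$ and its derivatives involve the $\m$- and $\h$-components of the $\xi_a$. A Killing vector $\xi$ with $\xi(o)=0$ has $\nabla\xi(o)=\operatorname{ad}(\xi)|_\m$ for its $\h$-component. A direct computation then shows that the top slot depends only on $Q|_{\h}$. It equals $Q([X,P],[X,P])$ after polarisation, that is, $K_Q$ as in \eqref{eq:KQ}. This is because the top slot is the second-derivative part, which sees $[X,P]\in\h$. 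Combining with Step 2 gives the equivalence: every Killing tensor is decomposable iff the image of the top-slot map consists of $K_Q$'s. By (i), that image is all of $\sT(\m)$.

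The main obstacle is Step 2. The difficulty is showing that \eqref{eq:K21}--\eqref{eq:K22} are not just necessary but also sufficient for extension, and that no further integrability conditions arise on the top slot. I would first try to get this by restricting to the totally geodesic flats spanned by $X,P$ and doing the computation there. Failing that, I would use $G$-equivariance together with a representation-theoretic count, comparing the dimensions of both sides as $G$-modules to rule out hidden conditions.
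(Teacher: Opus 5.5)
Your outline follows the framework behind the result the paper quotes; the paper does not reprove the proposition but cites \cite[Theorem~3, Remark~2]{MN2}. That framework is: Killing tensors that are top slot at $o$, their identification with $\sT(\m)$, and the $K_Q$ as the top slots of decomposable tensors. But both load-bearing steps are only announced, so there is a genuine gap.

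\textbf{The isomorphism with $\sT(\m)$.} The central step is the isomorphism between Killing tensor fields top slot at $o$ (that is, $\cK(o)=0$ and $\nabla\cK(o)=0$) and $\sT(\m)$. You must show that on such data the integrability conditions are exactly \eqref{eq:K21} and \eqref{eq:K22}. You must also show, and this is the harder half, that every solution of \eqref{eq:K2c}--\eqref{eq:K22} integrates to a global Killing tensor. Parallel sections of the prolongation bundle correspond to vectors at $o$ annihilated by the whole holonomy algebra. That means the curvature $R^D$ of the prolongation connection \emph{and} all its covariant derivatives at $o$, not $R^D_o$ alone; $R^D$ need not be parallel even though $\nabla R=0$. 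Proving that this infinite list collapses to two conditions is precisely the missing content, and neither fallback route supplies it:
\begin{itemize}
\item On a flat, $R(X,P)P=0$, so \eqref{eq:K21} and \eqref{eq:K22} are vacuous there. For non-commuting $X,P$, the Lie triple system generated by $X,P$ is in general much larger than $\Span(X,P)$.
\item A $G$-module dimension count needs independent knowledge of the space of Killing tensors, which is the unknown.
\end{itemize}

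\textbf{Reduction to the top slot.} The second gap is reducing an arbitrary Killing tensor to one that is top slot at $o$; you state this as (ii) with no argument. What is needed is that the $1$-jet $(\cK(o),\nabla\cK(o))$ of every Killing tensor is the $1$-jet of a decomposable one. The value is easy: use $X^*\odot Y^*$ for transvections $X^*,Y^*$. But once $\cK(o)=0$, the tensor $\nabla\cK(o)$ lies a priori anywhere in the $(2,1)$-component of $\Sym^2(\m)\otimes\m$, of dimension $\frac{1}{3}N(N^2-1)$. You must prove it lies in the image of $\m\otimes\h$ under $X\otimes A\mapsto\nabla(X^*\odot A^*)(o)$. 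When $\h\ne\so(\m)$ this is a real restriction (for a group manifold, $\dim(\m\otimes\h)=N^2$), and it has to be extracted from the integrability conditions.

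\textbf{Framing of the top slot.} You should not work with the top slot of an \emph{arbitrary} Killing tensor. The prolonged equations mix the slots, and \eqref{eq:K21}--\eqref{eq:K22} are the conditions for data with vanishing lower slots. Your Step~3 claim, that the top slot of $\sum Q^{ab}\xi_a\odot\xi_b$ sees only $Q|_\h$, fails when $L$ is a component of $\nabla^2\cK(o)$. A transvection has $\nabla X^*(o)=0$ but $\nabla^2_{Y,Z}X^*(o)=R(Y,X)Z\ne0$, so the $\Sym^2(\m)$-part of $Q$ contributes; the $(2,2)$-component of $\nabla^2(X^*\odot X^*)(o)$ is already nonzero on $S^2$. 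Your computation is correct in the top-slot setting. There $\cK(o)=0$ kills the $\Sym^2(\m)$-part, the $\m\otimes\h$-part contributes nothing to $\nabla^2\cK(o)$, and the top slot is $K_{Q|_\h}$.
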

    \begin{remark} \label{rem:Spin4}
      For the induction argument in Subsection~\ref{ss:induction} we will need the fact that the claim of Proposition~\ref{p:topslot2} also holds for the \emph{reducible} symmetric space $\Spin(4)$. As any Killing tensor field on $\Spin(4)$ is decomposable (see Remark~\ref{rem:low}), we need to show that any element $K \in \sT(\so(4))$ has the form $K_Q$ given by~\eqref{eq:KQ}. But by~\cite[Proposition~2]{MN2}, any element $K \in \sT(\so(4))$ defines a top slot quadratic Killing tensor field on $\Spin(4)$, and then the claim follows from~\cite[Remark~3]{MN2}.
    \end{remark}
	\begin{remark} \label{rem:dim}
		By the algebraic symmetries~\eqref{eq:K2c}, any element $K \in \sT(\m)$ is uniquely determined by its values $K(X,X,P,P)$. The space of tensors on $\m$ satisfying~\eqref{eq:K2c} is isomorphic to the space of algebraic curvature tensors on $\m$ as a module of the permutation group on four elements and has dimension $\frac{1}{12} N^2(N^2-1)$, where $N = \dim \m$~\cite[Remark~3]{MN2}. The space $\sT(\m)$ sits inside this ambient space as the subspace further cut out by~\eqref{eq:K21} and~\eqref{eq:K22}.
		
		One easily verifies that the tensors $K_Q$ defined by~\eqref{eq:KQ} indeed satisfy~\eqref{eq:K2c}, \eqref{eq:K21} and~\eqref{eq:K22} using the fact that for a symmetric space, we always have $[X,R(X,P)P]+[P,R(P,X)X]=0$. This equation is equivalent to $[X,[[X,P],P]]+[P,[[P,X],X]]=0$ which follows from the Jacobi identity.
	\end{remark}
	
	Polarising~\eqref{eq:KQ} we obtain
	\begin{equation}\label{eq:KQpolar}
		K_Q(X,Y,Z,P) = \frac12 (Q([X,P],[Y,Z]) + Q([X,Z],[Y,P])),
	\end{equation}
	for all $X,Y,Z,P \in \m$.
	
We prove the following fact similar to~\cite[Corollary~3]{EL}.
	\begin{lemma} \label{l:nosyz}
		Let $M$ be a compact, simply connected globally symmetric space. In each of the following cases:
        \begin{enumerate}[label=\emph{(\roman*)},ref=\roman*]
          \item \label{it:nosyzirred}
          $M$ is irreducible of non-constant curvature,

          \item \label{it:nosyzSpin}
          $M$ is isometric to $\Spin(3)$ or $\Spin(4)$ with a bi-invariant metric,
        \end{enumerate}
        the linear map $Q \mapsto K_Q$ defined by~\eqref{eq:KQ} is injective.
	\end{lemma}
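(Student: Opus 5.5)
Suppose $K_Q=0$, i.e.\ $Q([X,P],[X,P])=0$ for all $X,P\in\m$; we want $Q=0$. Polarising via \eqref{eq:KQpolar}, the symmetric bilinear form $Q$ satisfies $Q([X,P],[Y,Z])+Q([X,Z],[Y,P])=0$ for all $X,Y,Z,P\in\m$. In particular $Q([X,P],[X,Z])=0$. Let $\Sigma\subset \Sym^2(\h)$ be the span of the symmetric products $[X,P]\odot[Y,Z]+[X,Z]\odot[Y,P]$. We then have to show that $\Sigma=\Sym^2(\h)$, or equivalently that no nonzero $Q$ annihilates $\Sigma$. The map $Q\mapsto K_Q$ is $H$-equivariant, so its kernel $\mathcal N$ is an $\h$-submodule of $\Sym^2(\h)^*$. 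We may therefore argue either by weights or by direct evaluation.

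For (\ref{it:nosyzirred}) I would argue as follows.

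First, $[\m,\m]=\h$ for an irreducible symmetric space of compact type. Hence the bilinear map $(X,P)\mapsto[X,P]$ spans $\h$, and $Q$ is determined by its values $Q([X,P],[Y,Z])$.

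Next, take the maximal abelian subspace $\ag\subset\m$ with restricted roots $\la$ and root vectors $X_\la\in\m$, $Y_\la=[H,X_\la]$-type elements in $\h$. Choose $X=H\in\ag$ and $P,Z$ root vectors. The identity $Q([H,P],[H,Z])=0$ for all $H$ gives $Q(Y_\la,Y_\mu)=0$ whenever $\la,\mu$ are linearly independent (vary $H$ so that $\la(H)\mu(H)$ are independent functions), and also for $\la=\mu$. This kills $Q$ on the span $\h'$ of the $\h$-parts of the root spaces, that is, on $\h\ominus\mathfrak z_\h(\ag)$.

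Then I would use $H$-equivariance and conjugation. Every element of $\h$ is $\mathrm{Ad}(H)$-conjugate into the complement of the centraliser $\mathfrak z_\h(\ag)$ only if that centraliser is small. So it is cleaner to use the full identity: $Q$ vanishes on $\h'\times\h'$, and $\mathcal N$ is $H$-invariant. Hence $\mathcal N$ vanishes on $\mathrm{Ad}(h)\h'\times\mathrm{Ad}(h)\h'$ for all $h$. Since $\sum_h \mathrm{Ad}(h)(\h'\odot\h')$ is a nonzero $H$-submodule of $\Sym^2\h$, I would identify it with $\Sym^2\h$ using the fact that $\h'$ generates $\h$ as a Lie algebra. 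Mixed terms can be handled with the polarised identity, taking $X=H\in\ag$, $Y,Z,P$ root vectors so that $[Y,Z]$ lands in $\mathfrak z_\h(\ag)$.

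The main obstacle is exactly where non-constant curvature enters. For spheres, $\h=\so(n)=\Lambda^2\m$, and $\Sigma$ misses the $\Lambda^4\m$ component, since $Q$ given by a $4$-form gives $K_Q=0$ by \eqref{eq:KQpolar}. So the decomposition into irreducibles must be used. I would reduce to showing that no $H$-irreducible summand $V$ of $\Sym^2(\h)^*$ lies in $\mathcal N$. Dually, $\mathcal N$ is the annihilator of the image of the $H$-equivariant surjection $\Sym^2(\Lambda^2\m)\to \Sym^2\h$, restricted to the curvature-tensor-symmetry part. Its failure to be onto is measured by the Bianchi-type component, which for $M$ not of constant curvature I would show is zero by checking highest weights, in the spirit of \cite[Corollary~3]{EL}. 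Case (\ref{it:nosyzSpin}) is a direct finite computation: $\h=\so(3)$ or $\so(3)\oplus\so(3)$, and the relevant brackets $[X,P]$ are explicit, so one verifies the kernel is trivial.
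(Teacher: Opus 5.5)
There is a genuine gap in (i): your argument never reaches the point where non-constant curvature is used, and that is the whole content of (i).

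\textbf{Your first steps only restate $K_Q=0$.} Since $K_Q$ is determined by $K_Q(X,X,P,P)=Q([X,P],[X,P])$, the condition $K_Q=0$ says exactly that $Q(Y,Y)=0$ for every $Y$ in the bracket image $\{[X,P] \mid X,P\in\m\}$. The polarised identity and the ``mixed terms'' add nothing beyond this. For $X$ in a Cartan subspace $\ag$ and $H\in\ag$ regular, $[X,\m]\subseteq[H,\m]=\h\ominus\mathfrak z_\h(\ag)=\h'$. So the bracket image is precisely the union of the spaces $\h'$ over all Cartan subspaces, and ``$Q$ vanishes on $\mathrm{Ad}(h)\h'\times\mathrm{Ad}(h)\h'$ for all $h$'' is just $K_Q=0$ again.

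\textbf{The step meant to finish fails.} You claim the $H$-module spanned by $\h'\odot\h'$ is all of $\Sym^2\h$ ``because $\h'$ generates $\h$ as a Lie algebra''. This cannot be right. For $S^n=\SO(n+1)/\SO(n)$, $n\ge 4$, the space $\h'=\{e_1\wedge v\}$ also generates $\so(n)$, yet that module misses the $\Lambda^4\m$ component, as you note yourself. The argument would therefore prove a false statement.

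\textbf{Your last paragraph is the actual content, and it is not carried out.} The reformulation there is right: one must show that the pull-back of $\Sym^2\h^*$ under the bracket meets $\Lambda^4\m^*$ trivially inside $\Sym^2(\Lambda^2\m)^*$. But ``checking highest weights'' is not a proof. Nor is it a matter of disjoint constituents: for $\bh P^n$, both $\Sym^2\h$ (the Killing forms of $\spg(n)$ and $\spg(1)$) and $\Lambda^4\m$ (the Kraines form) contain trivial summands. One would need an explicit check, type by type, over the whole classification.

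\textbf{How the paper supplies this.} It outsources the step to a global result.
\begin{enumerate}
\item Extend $Q$ by zero on $\m$ to a form $Q'$ on $\g$. This gives a decomposable quadratic Killing tensor field that is top slot at $o$.
\item By \cite[Theorem~3]{MN2}, that field corresponds to $K_Q\in\sT(\m)$. Hence $K_Q=0$ puts $Q'$ in the kernel of the map from quadratic forms on Killing fields to Killing tensor fields.
\item By \cite[Corollary~3]{EL}, this kernel is zero unless $M$ is a compact simple group.
\item In the group case the kernel is spanned by $\ip_1-\ip_2$, which does not vanish on pairs involving $\m$, so it is not of the form $Q'$ unless zero.
\end{enumerate}
This injectivity result from \cite{EL} is the missing ingredient.

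Your conjugation idea does work directly in the group case. Every element of a compact semisimple Lie algebra is orthogonal to some Cartan subalgebra, hence is a bracket, so $Q(Y,Y)=0$ for all $Y$. This is also the paper's argument for (ii), where surjectivity of the bracket onto $\so(3)$ and $\so(4)=\so(3)\oplus\so(3)$ is immediate. Your sketch of (ii) is fine.
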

	\begin{proof} 
		We start with assertion~\eqref{it:nosyzirred}. A quadratic form $Q$ on $\h$ can be extended to a quadratic form $Q'$ on $\g$ in the obvious way, by setting $Q'(X_1+A_1,X_2+A_2) = Q(A_1,A_2)$, for $X_1, X_2 \in \m, \, A_1,A_2 \in \h$. The quadratic form $Q'$ defines a decomposable quadratic Killing tensor field $\cK$ on $M$ which is top slot at the point $o \in M$. By~\cite[Theorem~3]{MN2}, there is a linear isomorphism between the space of quadratic Killing tensor fields which are top slot at $o \in M$ and the space $\sT(\m)$, under which the tensor field $\cK$ corresponds to the tensor $K_Q \in \sT(\m)$. Therefore if $K_Q = 0$, then $\cK = 0$, and so $Q'$ lies in the kernel of the homomorphism~\cite[Equation~(43)]{EL} mapping quadratic forms on Killing vector fields on $M$ to quadratic Killing tensor fields on $M$. If $M$ is not a compact simple Lie group with a bi-invariant metric, this homomorphism is injective by~\cite[Corollary~3]{EL}, and so we must have $Q' = 0$, from which $Q=0$, as required.

If $M$ is  a compact simple Lie group $H$ with a bi-invariant metric, then $\g = \h_1 \oplus \h_2$ (direct sum of ideals), where we identify $\h_1$ with $\h$ and $\h_2$ with $\m$. We take the inner product on $\g$ in such a way that $\h_1$ and $\h_2$ are orthogonal, and whose restriction $\ip_i$ to each copy $\h_i$ of $\h$ is the same negative multiple of the Killing form of $\h$. The kernel of the homomorphism~\cite[Equation~(43)]{EL} is $1$-dimensional and is spanned by the quadratic form $\ip_1 - \ip_2$. But this quadratic form is not zero on $\m = \h_2$, and so is not equal to the quadratic form $Q'$ defined in the previous paragraph, for any quadratic form $Q$ on $\h = \h_1$. This completes the proof of assertion~\eqref{it:nosyzirred}.
	
Assertion~\eqref{it:nosyzSpin} is easy. Indeed, by~\eqref{eq:KQ} it is sufficient to show that for $n=3,4$, a quadratic form $Q$ on $\so(n)$ is zero, provided $Q(Y,Y)=0$ for all $Y$ lying in the image of the bracket map $(X,P) \mapsto [X,P]$. But in both cases, that image is the whole algebra $\so(n)$.
\end{proof}

We record the following useful fact.
	\begin{lemma} \label{l:fourcommute}
		Let $M= G/H$ be an irreducible symmetric space. Then for any $K \in \sT(\m)$ we have $K(X_1,X_2,X_3,X_4) = 0$ provided $[\Span(X_1,X_2),\Span(X_3,X_4)] = 0$.
	\end{lemma}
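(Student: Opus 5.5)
The plan is to reduce to a diagonal statement and then extract it from the defining equations \eqref{eq:K21}, \eqref{eq:K22} by expanding them around a commuting pair. Write $A=\Span(X_1,X_2)$ and $B=\Span(X_3,X_4)$, so that $[A,B]=0$. By the symmetries \eqref{eq:K2c}, $K(X_1,X_2,X_3,X_4)$ is obtained by polarising the quadratic-in-each-slot expression $K(a,a,b,b)$ over $a\in A$ and $b\in B$. It therefore suffices to show that $K(a,a,b,b)=0$ whenever $a,b\in\m$ with $[a,b]=0$.

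\textbf{Step 1: perturbative identities.} Fix a commuting pair $a,b$. Then $R(a,b)=-\operatorname{ad}_{[a,b]}=0$, and also $R(a,a+tb)=0$ for every $t$. Substitute $X=a+sb$ and $P=b+\varepsilon Z$, with $Z\in\m$ arbitrary, into \eqref{eq:K21}, and expand in $\varepsilon$.
\begin{itemize}
\item The $\varepsilon^0$ term vanishes identically.
\item The $\varepsilon^1$ term gives a linear relation among values $K(a,a,b,[[a,Z],b])$ and $K(b,b,a,[[Z,a],a])$, together with their $s$-deformations. Using Jacobi and $[a,b]=0$, one has $[[a,Z],b]=[a,[Z,b]]$.
\item The $\varepsilon^2$ term contains $K(a,a,b,R(a,Z)Z)$ and $K(b,Z,a,R(Z,a)a)$.
\end{itemize}
I would also expand \eqref{eq:K22} in the same way, to second order. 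All of these are identities valid for every $Z\in\m$.

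\textbf{Step 2: averaging via irreducibility.} Sum the $\varepsilon^2$ identities over an orthonormal basis $Z=e_i$ of $\m$. Irreducibility makes $M$ Einstein, so $\sum_i R(e_i,a)e_i=\mathrm{Ric}(a)=c\,a$ with $c\neq 0$. The term $\sum_i K(a,a,b,R(a,e_i)e_i)$ therefore becomes $-c\,K(a,a,b,a)$. The traces $\sum_i K(b,e_i,a,[[e_i,a],a])$ are contractions of $K$ against the symmetric operator $-\operatorname{ad}_a^2|_\m$. I would use $\h$-irreducibility of $\m$ (so $\h=[\m,\m]$ acts irreducibly, Casimir is scalar) to rewrite such contractions.

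Next, replace $a$ by $a+sb$ and take the $s$-linear coefficient. This converts $K(a,a,b,a)$-type terms into $K(a,a,b,b)$, using the symmetries \eqref{eq:K2c}. The goal is a relation of the form $\lambda K(a,a,b,b)=(\text{terms already shown to vanish})$ with $\lambda\ne 0$.

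\textbf{Main obstacle.} The hard part is Step 2: controlling the leftover trace terms and ensuring that the coefficient $\lambda$ in front of $K(a,a,b,b)$ is nonzero. It may be necessary to combine the traced first-order identity from \eqref{eq:K21} with the traced second-order identity from \eqref{eq:K22}, and to use the Bianchi-type cyclic identity in \eqref{eq:K2c} to cancel the mixed terms. If a direct trace does not close up, my fallback is to specialise $Z$ to lie in $[\h,a]$ or $[\h,b]$. For such $Z$, the brackets $[[a,Z],b]$ range over a large enough subspace, by irreducibility, to force the first-order identity to vanish term by term.
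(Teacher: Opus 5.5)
Your polarisation step is correct and is exactly how the paper concludes. The paper obtains the diagonal fact $K(X,X,P,P)=0$ for $[X,P]=0$ from \cite[Corollary~2]{MN2} via Proposition~\ref{p:topslot2}. In your proposal this diagonal fact is the entire content, and Step~2 does not deliver it.

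Putting $X=Y=Z$ in the cyclic identity of \eqref{eq:K2c} gives $K(X,X,X,P)=0$ for all $X,P$. Hence your Einstein-traced term $\sum_i K(a,a,b,R(a,e_i)e_i)=\pm c\,K(a,a,b,a)$ vanishes identically. So does the $s$-linear coefficient $2K(a,b,b,a)+K(a,a,b,b)$ of $K(a+sb,a+sb,b,a+sb)$, so the $\lambda$ you are hoping for is $0$. The other traces are contractions against $\mathrm{ad}_a^2|_{\m}$, which is not a Casimir operator, so Schur's lemma gives nothing there.

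Your fallback fails for a structural reason: no argument using only \eqref{eq:K2c} and \eqref{eq:K21} can work. The tensor $K_0$ of item~\eqref{it:K21only} in Subsection~\ref{ss:sage} satisfies both on every symmetric space. Yet $K_0(a,a,b,b)=2(\|a\|^2\|b\|^2-\<a,b\>^2)\neq 0$ for linearly independent commuting $a,b$, which exist once $\rk M\ge 2$. Moreover, for $a,b$ in a Cartan subspace $\ag$ one has $[[a,Z],b]\in\ag^\perp$, so your first-order identity never even involves $K(a,a,b,\ag)$. Thus \eqref{eq:K22} must be used in an essential way, and your proposal never actually does so.

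Your expansion $X=a$, $P=b+\varepsilon Z$ does work if you do not trace; the Ricci trace sums precisely the useful terms to zero. Work in compact type with $a,b$ in a Cartan subspace $\ag$ and $Z$ in a single restricted root space $\m_\a$, where $\mathrm{ad}_H=\a(H)J$ on $\m_\a\oplus\h_\a$ with $J^2=-1$. Then:
\begin{itemize}
\item $[[a,Z],b]=\a(a)\a(b)Z$, $[[a,Z],a]=\a(a)^2Z$ and $[[a,Z],Z]=\a(a)[JZ,Z]$.
\item $[JZ,Z]$ commutes with $\ag$, hence equals $-\|Z\|^2H_\a$.
\item The $\varepsilon^2$-coefficient of \eqref{eq:K22} is $\a(a)^2\bigl(\a(b)^2K(a,a,Z,Z)-\a(a)^2K(b,b,Z,Z)\bigr)=0$. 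Hence $K(a,b,Z,Z)=\a(a)\a(b)\kappa(Z)$ for some scalar $\kappa(Z)$.
\item The $\varepsilon^2$-coefficient of \eqref{eq:K21} is $K(a,a,b,[[a,Z],Z])+K(a,a,Z,[[a,Z],b])+2K(b,Z,a,[[a,Z],a])=0$. Using $2K(b,Z,a,Z)=-K(a,b,Z,Z)$, which follows from \eqref{eq:K2c}, this collapses to $\a(a)\|Z\|^2K(a,a,b,H_\a)=0$.
\end{itemize}
Since the dual roots span $\ag$, this yields $K(\ag,\ag,\ag,\ag)=0$. Together with your polarisation, that would give a self-contained algebraic proof in place of the paper's citation. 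As written, however, your proposal contains no valid derivation of the key vanishing.
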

	\begin{proof}
		From~\cite[Corollary~2]{MN2}, applied to quadratic Killing tensors, together with Proposition~\ref{p:topslot2}, it follows that for any $K \in \sT(\m)$ we have $K(X,X,P,P) = 0$ when $[X, P] = 0$. Now the claim follows by polarisation using the symmetries~\eqref{eq:K2c}.
	\end{proof}
		
We prove the following stronger statement.

	\begin{lemma} \label{l:threecommute}
		Let $M$ be a compact, irreducible globally symmetric space. Then for any $K \in \sT(\m)$, we have $K(X_1,X_2,X_3,X_4) = 0$ provided $[X_1,X_3]=[X_2,X_3] =0$.
	\end{lemma}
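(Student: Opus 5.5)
The plan is to reduce, by polarisation and a restricted-root decomposition, to the case where $X_4$ is a root vector, and then use the polarised forms of \eqref{eq:K21}.

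\emph{Reduction.} By the symmetries \eqref{eq:K2c}, $K(X_1,X_2,X_3,X_4)$ is symmetric in $X_1,X_2$. Replacing $(X_1,X_2)$ by $(X_1+X_2,X_1+X_2)$ preserves the hypothesis that both commute with $X_3$. So it suffices to prove $K(X,X,Z,P)=0$ for all $X,Z\in\m$ with $[X,Z]=0$ and all $P\in\m$. Since $X$ and $Z$ commute, they lie in a common maximal abelian subspace $\ag\subset\m$. Take the restricted root decomposition $\m=\ag\oplus\bigoplus_{\la}\m_\la$ with respect to $\ag$. We have $[\ag,\m_\la]\subset\h_\la$ and $[\ag,\h_\la]\subset\m_\la$, with $\mathrm{ad}_A^2=-\la(A)^2$ on $\m_\la$. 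The functional $P\mapsto K(X,X,Z,P)$ is linear, so it suffices to treat $P\in\ag$ and $P\in\m_\la$ separately. For $P\in\ag$ the vectors $X,Z,P$ pairwise commute, and Lemma~\ref{l:fourcommute} gives $K(X,X,Z,P)=0$. Thus everything reduces to showing
\[
K(A,B,C,P)=0 \quad\text{for all } A,B,C\in\ag,\ P\in\m_\la .
\]

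\emph{Main step.} For the case $P\in\m_\la$ I would polarise \eqref{eq:K21}, substituting $X=A+sB+tC$ with $A,B,C\in\ag$ and $P\mapsto P+uQ$ with $P,Q\in\m_\la$. Two facts about the curvature should help here.
\begin{itemize}
\item $R(A,P)Q=-[[A,P],Q]$ lies in $\ag\oplus\m_{2\la}\oplus\dots$, and it is computable: for $A\in\ag$ and $P\in\m_\la$ one has $R(A,P)A=\la(A)^2P$.
\item Consequently the right-hand side $K(P,P,X,R(P,X)X)$ of \eqref{eq:K21} with $X\in\ag$ becomes $\la(X)^2K(P,P,X,P)$.
\end{itemize}
The left-hand side $K(X,X,P,R(X,P)P)$ involves $R(X,P)P=-[[X,P],P]$. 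Its $\ag$-component is proportional to $\la(X)|P|^2 H_\la$, where $H_\la$ is the dual vector of $\la$. Its remaining components lie in $\m_{2\la}$, and terms with $K(\ag,\ag,\m_\la,\m_{2\la})$ vanish by root-weight bookkeeping. Comparing coefficients of the polarisation variables, I expect a linear system of the form
\[
\la(A)\,|P|^2\, K(A,A,P,H_\la) \;=\; c\,\la(A)^2\, K(P,P,A,P)+\dots
\]
together with analogous identities containing one $P$ and three $\ag$-entries. Choosing $A$ with $\la(A)=0$ kills some of these terms. Choosing $A=H_\la$ isolates others. These choices should force $K(\ag,\ag,\ag,\m_\la)=0$.

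\emph{Alternative route.} If this linear-algebra route stalls, I would use the one-parameter group $e^{t\,\mathrm{ad}_T}$ with $T\in\h_0$, the centraliser of $\ag$ in $\h$. Such $T$ preserves $\ag$ and acts on $\m_\la$. Compactness of $M$ enters through the existence of $T$ that rotates $\m_\la$ nontrivially while fixing $\ag$ pointwise. Combined with \eqref{eq:K22}, this should show that the functional $P\mapsto K(A,B,C,P)$ on $\m_\la$ must vanish.

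The main obstacle is exactly this case $P\in\m_\la$. Lemma~\ref{l:fourcommute} does not apply there, and the proof must extract the vanishing from \eqref{eq:K21} and \eqref{eq:K22}, handling roots $\la$ for which $2\la$ is also a root and the degenerate case $\dim\ag=1$.
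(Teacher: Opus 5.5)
Your reduction to $K(\ag,\ag,\ag,\m_\la)=0$ is correct and is the same as the paper's. The main step, however, is only asserted, and the equations you propose cannot deliver it when $\rk M\ge 3$.

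With $X\in\ag$ and $P\in\m_\la$, the right-hand side of \eqref{eq:K21} vanishes identically, because $K(P,P,X,P)=K(P,P,P,X)=0$ by the cyclic identity in \eqref{eq:K2c}. Write $[X,P]=\la(X)\tilde P$ with $\tilde P\in\h_\la$ independent of $X$. Then the whole content of \eqref{eq:K21}, and of all its polarisations in $X\in\ag$, $P\in\m_\la$, is $K(X,X,P,[\tilde P,P])=0$. The $\ag$-component of $[\tilde P,P]$ is a nonzero multiple of $|P|^2H_\la$. So the only part of $K$ on $\ag\times\ag\times\ag\times\m_\la$ that ever enters is the symmetric form $K(\cdot,\cdot,H_\la,P)$ on $\ag$. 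When $2\la$ is not a root this gives $K(\ag,\ag,H_\la,\m_\la)=0$, which together with Lemma~\ref{l:fourcommute} does settle rank two for reduced root systems.

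For $Y,Y'\in\ag\cap\ker\la$, however, the component $K(H_\la,Y,Y',P)$ never appears. The cyclic identity and Lemma~\ref{l:fourcommute} only force it to be antisymmetric in $Y,Y'$. To see that your equations are blind to it, take $T(A,B,C)=\omega(A,C)\la(B)+\omega(B,C)\la(A)$, with $\omega$ a $2$-form on $\ag$ satisfying $\omega(H_\la,\cdot)=0$. This $T$ is symmetric in $A,B$ and has zero cyclic sum. It vanishes whenever $\la(A)=\la(B)=0$ and on every triple $(X,X,H_\la)$. Yet $T(H_\la,Y,Y')=\la(H_\la)\omega(Y,Y')\ne 0$ as soon as $\dim\ker\la\ge 2$.

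Two further steps also fail. When $2\la$ is a root, your ``root-weight bookkeeping'' is not available: $K$ is not invariant under any torus (only the space $\sT(\m)$ is $H$-invariant). For instance, $K_Q(A,A,P,P')=Q([A,P],[A,P'])$ is generically nonzero for $P\in\m_\la$, $P'\in\m_{2\la}$. The alternative route fails for the same reason, since $K$ is not $e^{t\,\mathrm{ad}_T}$-invariant. Moreover, the centraliser $\h_0$ of $\ag$ in $\h$ can be zero, e.g.\ for $\SU(n)/\SO(n)$.

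The missing idea is a source of vanishing that ties a root space to \emph{all other} roots; in the paper it comes from \eqref{eq:K22}, not \eqref{eq:K21}.
\begin{itemize}
\item Take $X=t_1Y_1+t_2Y_2+t_3Y_3$ with $[Y_3,P]=[Y_3,Y_1]=0$. The coefficient of $t_1t_2t_3^2$ in \eqref{eq:K22} gives $K(Y_3,Y_3,[[Y_1,P],P],[[Y_2,P],P])=0$.
\item Now let $P,Y_3\in\ag$, $Y_1\in\m_\a$ with $\a(Y_3)=0$, and $Y_2$ arbitrary. The last slot then sweeps out $\ag^\perp$, so $K(Y,Y,Z,\m)=0$ against the whole of $\m$.
\item Apply this to the Cartan subspace $(\ag\cap H_\a^\perp)\oplus\br X_\a$, with $0\ne X_\a\in\m_\a$, which is orthogonal to $H_\a$. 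This yields $K(X,Y,H_\a,\m)=0$ for all $X,Y\in\ag\cap H_\a^\perp$ and every root $\a$.
\item A root-system argument then removes all remaining components, including ones like $K(H_\la,Y,Y',P)$. Via inclusions it reduces to the root systems $\mathrm{D}_2$, $\mathrm{A}_r$, $\mathrm{D}_r$ with $r\ge 4$, and $\mathrm{E}_6$.
\end{itemize}
Finally, rank one is not an obstacle: $K(A,A,A,\cdot)=0$ by the cyclic identity.
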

	We note that by the algebraic symmetries~\eqref{eq:K2c}, the same is true for any quadruple $(X_1,X_2,X_3,X_4)$ with any one of the elements from either the first or the second pair commuting with both elements of the other pair. We also note that any tensor $K_Q$ has the claimed property, as can be seen directly from~\eqref{eq:KQpolar}.

	\begin{proof}
		Let $K \in \sT(\m)$. By the symmetry of $K$ in the first two arguments~\eqref{eq:K2c}, it is sufficient to prove that $K(X_1,X_1,X_3,X_4) = 0$ assuming $[X_1,X_3]=0$. As any two commuting elements of $\m$ lie in a Cartan subspace, it is sufficient to show that $K(X_1,X_2,X_3,X_4) = 0$ assuming $X_1,X_2$ and $X_3$ lie in some Cartan subspace $\ag \subset \m$, or equivalently, to show that $K(\ag,\ag,\ag,\m) = 0$. We note that if $\rk M =1$, the Cartan subspace $\ag$ is $1$-dimensional and the claim follows from the last equation of~\eqref{eq:K2c}. We will therefore assume that $\rk M \ge 2$. % helgason, lemma V.6.3; called max abelian subspace; check in Wolf: ok, Th 8.6.1
		
		In equation~\eqref{eq:K22}, take $X=t_1Y_1+t_2Y_2+t_3Y_3$, with $[Y_3,P] = [Y_3,Y_1]=0$. Then
		\[
		\begin{aligned}
			R(X,P)P &= -[[t_1Y_1+t_2Y_2,P],P],\\
			R(P,X)X &= -[[P,t_1Y_1+t_2Y_2],t_1Y_1+t_2Y_2]
			-t_2t_3[[P,Y_2],Y_3].
		\end{aligned}
		\]
		Therefore the coefficient of $t_1t_2t_3^2$ in the resulting equation gives
		\[
		K(Y_3,Y_3, [[Y_1,P],P], [[Y_2,P],P]) = 0.
		\]
		Now take a Cartan subspace $\ag \subset \m$, let $P, Y_3$ lie in $\ag$, and let $Y_1$ be an element of a root space $\m_\a$ such that $\a(Y_3) = 0$, so that $[Y_3,P] = [Y_3,Y_1]=0$, by our choice. Then $[[Y_1,P],P] = -\a(P)^2 Y_1$, and we obtain $K(Y_3,Y_3,Y_1, [[Y_2,P],P]) = 0$. Note that we have imposed no assumptions on $Y_2$. As $\Span([[Y_2,P],P] \, | \, Y_2 \in \m, \, P \in \ag) = \ag^\perp$, we obtain $K(Y_3,Y_3,Y_1, \ag^\perp) = 0$. But $K(Y_3,Y_3,Y_1, \ag) = 0$ by Lemma~\ref{l:fourcommute}. Hence denoting $Y=Y_3$ and $Z = Y_1$ we obtain:
		\begin{equation}\label{eq:threeag}
			K(Y,Y,Z,\m) = 0,
		\end{equation}
% co-root
		for any two elements $Y,Z \in \m$ such that $[Y,Z] =0$ and such that there exists a Cartan subspace $\ag \ni Y$ which is orthogonal to $Z$. Suppose $H_\a \in \ag$ is a dual root of some root $\a$. Then for any nonzero $X_\a \in \m_\a$, the subspace $\ag' = (\ag \cap H_\a^\perp) \oplus \br \, X_\a \subset \m$ is abelian of dimension $\rk M$, and hence is a Cartan subspace orthogonal to $H_\a$. From~\eqref{eq:threeag} it follows that
		\begin{equation}\label{eq:threeagroot}
			K(X,Y,H_\a,\m) = 0,
		\end{equation}
for any dual root $H_\a \in \ag$ and any $X, Y \in \ag$ such that $X,Y \perp H_\a$.

We need to show that~\eqref{eq:threeagroot} implies that $K(\ag,\ag,\ag,\m) = 0$. The proof of this fact only requires working on $\ag$ and only depends on the dual root system $\Delta^* \subset \ag$. We note that if the dual root systems $\Delta^*_1$ and  $\Delta^*_2$ have the same rank and  $\Delta^*_2 \subset \Delta^*_1$, it suffices to establish the required implication only for  $\Delta^*_2$; for $\Delta^*_1$, we simply have more equations in~\eqref{eq:threeagroot}. We have the following inclusions between the root systems of irreducible symmetric spaces of the same rank $r \ge 2$: $\mathrm{D}_r \subset \mathrm{B}_r, \mathrm{C}_r, \mathrm{BC}_r; \; \mathrm{D}_4 \subset \mathrm{F}_4; \;  \mathrm{D}_8 \subset \mathrm{E}_8; \; \mathrm{A}_7 \subset \mathrm{E}_7$ and $\mathrm{A}_2 \subset \mathrm{G}_2$. It follows that we need to consider only the root systems $\mathrm{A}_r, \, r \ge 2, \; \mathrm{E}_6$ and $\mathrm{D}_r, \, r=2$ or $r \ge 4$ (as $\mathrm{D}_3 = \mathrm{A}_3$); note that $\mathrm{D}_2$ is reducible, but we need it for $\mathrm{B}_2, \mathrm{C}_2$ and $\mathrm{BC}_2$.

For $\mathrm{D}_2$, the claim is immediate. In $\ag = \br^2$ with an orthonormal basis $\{e_1,e_2\}$, we have $\Delta^* = \{\pm e_1, \pm e_2\}$. Then from~\eqref{eq:threeagroot} we obtain $K(e_1,e_1,e_2,\m) = K(e_2,e_2,e_1,\m) = 0$, and from the last equation of~\eqref{eq:K2c} it follows that $K(\ag,\ag,\ag,\m) = 0$.

For $\mathrm{A}_r, \, r \ge 2$, we consider the Euclidean space $\br^{r+1}$ with an orthonormal basis $\{e_i\}$. Then $\ag = (\sum_{i=1}^{r+1}e_i)^\perp$ and $\Delta^* = \{u_{ij} = e_i-e_j \, | \, 1 \le i \ne j \le r+1\}$. As the elements $u_{1i}, \, i \ge 2$, form a basis for $\ag$, it is sufficient to show that $K(u_{12},u_{12},u_{13},X) = K(u_{12},u_{13},u_{14},X)= 0$, for all $X \in \m$. To prove the first equation we notice that $K(2u_{13}-u_{12},2u_{13}-u_{12},u_{12},X) = 0$ by~\eqref{eq:threeagroot}, and so by~\eqref{eq:K2c} we get $4K(u_{13},u_{13},u_{12},X) + 2K(u_{12},u_{12},u_{13},X) = 0$. Swapping the subscripts $2$ and $3$ we obtain $K(u_{12},u_{12},u_{13},X) = K(u_{13},u_{13},u_{12},X) = 0$, as required. For the second equation, we have $K(u_{12}-u_{13}, u_{12}-u_{13}, u_{14},X) = 0$ from~\eqref{eq:threeagroot}, and as $K(u_{12}, u_{12}, u_{14},X) = K(u_{13}, u_{13}, u_{14},X) = 0$ by the previous argument, we obtain $K(u_{12},u_{13},u_{14},X)= 0$.

For both $\mathrm{D}_r, \, r \ge 4$, and $\mathrm{E}_6$ we use the following argument. Take a subset $S \subset \Delta^*$ which spans $\ag$. It is sufficient to show that $K(X,Y,H_\a,\m) = 0$, for any $H_\a \in S$ and any $X, Y \in \ag$. If $X, Y \perp H_\a$, this follows from~\eqref{eq:threeagroot}, while if $X=Y=H_\a$, from the last equation of~\eqref{eq:K2c}. It remains to show that $K(X,H_\a,H_\a,\m) = 0$ for $X \perp H_\a$, which by~\eqref{eq:K2c} is equivalent to showing that $K(H_\a,H_\a,X,\m) = 0$, for any $X  \in \ag \cap H_\a^\perp$. This fact will follow from~\eqref{eq:threeagroot} provided the set of dual roots $H_\beta \perp H_\a$ spans the whole subspace $H_\a^\perp$.

The latter fact is immediate for $\mathrm{D}_r, \, r \ge 4$: in the Euclidean space $\br^r = \ag$, the dual roots are $\pm e_i \pm e_j,\, 1 \le i < j \le r$, for some orthonormal basis $\{e_i\}$. Taking $S = \{e_i+e_j \, | \, 1 \le i < j \le r\}$ we see that for any element $e_i + e_j \in S$, the dual roots $e_i-e_j$ and $e_k \pm e_l$ with $\{k,l\} \cap \{i,j\} = \varnothing$ span the whole subspace $(e_i+e_j)^\perp \subset \ag$.

To see that the same fact holds for the root system $\Delta^*$ of type $E_6$, we consider its presentation in the Euclidean space $\br^9$ with an orthonormal basis $\{e_i^a\},\, i, a = 1,2,3$, and with $\ag = (\Span(\sum_{i=1}^{3}e_i^1, \sum_{i=1}^{3}e_i^2, \sum_{i=1}^{3}e_i^3))^\perp$. According to~\cite{VdJ}, we have $\Delta^* = S_1 \cup S_2$, where $S_1=\{e_i^a-e_j^a \,| \, i \ne j\}$ and $S_2=\{\pm \frac13 (v-2(e_i^1+e_j^2+e_k^3))\}$ for $v=\sum_{i,a=1}^{3}e_i^a$. Now the elements of $S_1$ span $\ag$, and for each $H \in S_1$, it is easy to check that the orthogonal complement $\ag \cap H^\perp$ is spanned by the elements of $\Delta^*$ orthogonal to $H$, as required.
	\end{proof}
	
	\subsection{The algebras \texorpdfstring{$\so(n)$, $\su(n)$ and $\spg(n)$}{so(n), su(n) and sp(n)}}
	\label{ss:algebras}
	
	We will use the following bases for the algebras $\so(n)$, $\su(n)$ and $\spg(n)$. We denote by $E_{ij}$ the $n \times n$ matrix whose $(i,j)$-entry is $1$ and whose other entries are zero.
	
	For $\h = \so(n)$, we choose an orthonormal basis $\{e_i\}$ for $\br^n$ and then take a basis for $\so(n)$ consisting of the elements $A_{ij} = E_{ij} - E_{ji}, \, 1 \le i < j\le n$. This basis is orthonormal (up to scaling). For a Cartan subalgebra $\ag \subset \h$, one can choose the subspace $\Span\{A_{2k-1,2k} \, | \, 2k \le n\}$. For any $r=1, \dots, n$, the subalgebra $\so(n-1)_r$ spanned by the elements $A_{ij}$ with $i,j \ne r$ corresponds to the subgroup $\Spin(n-1)_r \subset \Spin(n)$ which is a totally geodesic submanifold.
	
	For $\h = \su(n)$, we choose a Hermitian orthonormal basis $\{e_i\}$ for $\bc^n$ and then take a basis for $\su(n)$ consisting of the elements $A_{ij} = E_{ij} - E_{ji}, \, B_{ij} = \ir (E_{ij} + E_{ji}), \, 1 \le i < j\le n$, and the elements $D_1, \dots, D_{n-1}$ forming a basis for the Cartan subalgebra $\ag = \Span(\diag(\ir \la_1, \dots, \ir \la_n) \, | \, \la_j \in \br, \sum_{j=1}^{n} \la_j = 0)$. For $r=1, \dots, n$, the subalgebra $\su(n-1)_r$ consisting of the matrices of $\su(n)$ annihilating vector $e_r$ corresponds to the totally geodesic subgroup $\SU(n-1)_r \subset \SU(n)$.
	
	For $\h = \spg(n)$, we take a quaternion-Hermitian orthonormal basis $\{e_i\}$ for $\bh^n$ and then take a basis for $\spg(n)$ consisting of the elements $A_{ij} = E_{ij} - E_{ji}, \, B_{ij}(q) = q (E_{ij} + E_{ji}), \, 1 \le i < j \le n$, and the elements $B_{ii}(q) = q E_{ii}, \, 1 \le i \le n$, where $q \in \{\ir,\jr,\kr\}$ is one of the elements of the standard quaternion basis. A particular choice of a Cartan subalgebra can be $\ag = \Span(B_{jj}(\ir) \, | \, j =  1, \dots, n)$. For $r=1, \dots, n$, the subalgebra $\spg(n-1)_r$ consisting of the elements of $\spg(n)$ which annihilate $e_r$ corresponds to the totally geodesic subgroup $\Sp(n-1)_r \subset \Sp(n)$.
	
	\begin{remark} \label{rem:lts}
		The significance of totally geodesic subgroups for our proof comes from the fact that the restriction of a Killing tensor field to a totally geodesic submanifold is again a Killing tensor field. In terms of the tensor $K$, this means that if $K \in \sT(\m)$, then its restriction to any Lie triple system $\m' \subset \m$ is an element of $\sT(\m')$ (recall that complete, totally geodesic submanifolds of a simply connected globally symmetric space $M$ passing through a point $o \in M$ are in one-to-one correspondence with the Lie triple systems $\m' \subset \m = T_oM$). This fact can also be seen directly from equations~\eqref{eq:K2c}, \eqref{eq:K21} and~\eqref{eq:K22} as by definition, a Lie triple system $\m' \subset \m$ is a subspace with the property $R(\m',\m')\m' \subset \m'$.
	\end{remark}
	
We also make the following observations.
	
	\begin{remark} \label{rem:spintoso}
		We will be proving Theorem~\ref{t:SOSUSp} for the group $\Spin(n)$, the simply connected double cover of $\SO(n)$, when $n \ge 3$ (note that Proposition~\ref{p:topslot2} requires the symmetric space $M$ to be simply connected). To deduce the proof for the group $\SO(n)$ from the proof for $\Spin(n)$, it suffices to show that any Killing vector field on $\Spin(n)$ can be projected to a Killing field on $\SO(n)$ under the covering projection. Killing vector fields on $\Spin(n)$ are in one-to-one correspondence with the elements of the Lie algebra $\so(n) \oplus \so(n)$ (direct sum of ideals) of the identity component $\Spin(n) \times \Spin(n)$ of the isometry group of $\Spin(n)$. Given a Killing vector field $\cV$ on $\Spin(n)$ defined by an element $L_1 \oplus L_2 \in \so(n) \oplus \so(n)$, we can construct a one-parameter group of isometries of $\SO(n)$ defined by $(U,t) \mapsto \exp(tL_1) \, U \exp(-tL_2)$, for $U \in \SO(n)$ and $t \in \br$, which in turn defines the Killing vector field $\cV'$ on $\SO(n)$ given by $\cV'(U) = L_1U - UL_2$, where we view $\SO(n)$ as a submanifold in the space $\br^{n^2}$ of $n \times n$ real matrices. Lifting $\cV'$ to $\Spin(n)$ we obtain a Killing vector field $\cV''$. But the Killing vector fields $\cV$ and $\cV''$ on $\Spin(n)$ coincide in a neighbourhood of the identity, and hence coincide everywhere. It follows that any Killing vector field on $\Spin(n)$ is a lift of a Killing vector field on $\SO(n)$.
	\end{remark}

	\begin{remark} \label{rem:low}
		We note that $\Sp(1)=\SU(2)=\Spin(3)$ is isometric to the round sphere $S^3$ and $\Spin(4)$, to the product $S^3 \times S^3$ of the round spheres. We also note the low-dimensional isomorphisms $\spg(2)\cong\so(5)$ and $\su(4)\cong\so(6)$ of the real Lie algebras. Since Killing tensor fields on spaces of constant curvature are decomposable (and since the product cases are treated in~\cite[Theorem~1]{MN2}), it is sufficient to prove Theorem~\ref{t:SOSUSp} for $\Spin(n)$ with $n\geq 5$, for $\SU(n)$ with $n=3$ or $n\geq 5$, and for $\Sp(n)$ with $n\geq 3$.
	\end{remark}

	\section{Proof of Theorem~\ref{t:SOSUSp}}
	\label{s:proof}
	
	By Proposition~\ref{p:topslot2}, we need to prove that for each of the three algebras $\h = \so(n), \, \su(n)$, $\spg(n)$, any tensor $K \in \sT(\h)$ has the form $K_Q$ given in~\eqref{eq:KQpolar}, for a quadratic form $Q$ on $\h$. From now on, we identify $\m$ with $\h$ and write $\sT(\h)$ for $\sT(\m)$.
	
	\subsection{Induction}
	\label{ss:induction}
	
	We argue by induction on $n$. Suppose that for a given $\h=\h(n) = \so(n), \su(n), \spg(n)$, the claim has already been established for all $n < \nu$ for some $\nu \ge 5$, and take $n = \nu$. Given $K \in \sT(\h)$, we construct a certain quadratic form $Q$ on $\h$, and then show that with such a $Q$, equation~\eqref{eq:KQpolar} is satisfied.
	
	First suppose that $\h = \spg(n)$. For $r=1, \dots, n$, we define the subalgebras $\spg(n-1)_r$ as in Subsection~\ref{ss:algebras}. We also use the basis $\{A_{ij}, B_{ij}(q), B_{kk}(q)\}$ introduced there, where $1 \le i < j\le n$ and $q \in \{\ir,\jr,\kr\}$. By Remark~\ref{rem:lts}, the restriction of $K \in \sT(\spg(n))$ to $\spg(n-1)_r$ is an element $K^r \in \sT(\spg(n-1)_r)$, and so by the induction assumption, there exists a quadratic form $Q^r$ on $\spg(n-1)_r$ such that $K^r = K_{Q^r}$. Now take $s \ne r$ and similarly construct a quadratic form $Q^s$ on $\spg(n-1)_s$. Restricting each of the forms $Q^r$ and $Q^s$ to the subalgebra $\spg(n-2)_{rs} = \spg(n-1)_r \cap \spg(n-1)_s$, we obtain two quadratic forms on $\spg(n-2)_{rs}$, for each of which equation~\eqref{eq:KQpolar} is satisfied, for all $X, Y, P, Z \in \spg(n-2)_{rs}$. Then by Lemma~\ref{l:nosyz}, these two restrictions must coincide. Now we define a quadratic form $Q$ on $\spg(n)$ by its coefficients relative to the chosen basis for $\spg(n)$ as follows. We let $C_{ij}, \ i < j$, to denote any of the basis elements $A_{ij}, B_{ij}(q)$, and let $C_{ii}$ to denote any of the basis elements $B_{ii}(q)$. For $i \le j$ and $k \le l$, we take any $r \notin \{i,j,k,l\}$ and set $Q(C_{ij}, C_{kl}) = Q^r(C_{ij}, C_{kl})$. By the previous argument, the form $Q$ is well defined.
	
	The same construction, replacing $C_{ij}$ by $A_{ij}$, works for $\h=\so(n)$ giving us a well-defined quadratic form $Q$ on $\so(n)$. Note that for $n=5$, the required quadratic forms $Q^r$ on $\so(4)_r$ exist by Remark~\ref{rem:Spin4}. The fact that the restrictions of the quadratic forms $Q^r$ and $Q^s$ coincide on $\so(n-2)_{rs}$ follows from assertion~\eqref{it:nosyzSpin} of Lemma~\ref{l:nosyz} when $n=5,6$ and from assertion~\eqref{it:nosyzirred} when $n \ge 7$.
	
	A similar line of argument, with a slight modification, also works for $\h = \su(n)$. Because of the diagonal elements, we should be a little more careful with defining the quadratic form $Q$. In the notation of Subsection~\ref{ss:algebras}, the algebra $\su(n)$ is spanned by the orthonormal off-diagonal matrices $A_{ij}$ and $B_{ij}$, with $1 \le i < j\le n$, and by the linearly dependent diagonal matrices $D_{ij} = \ir (E_{ii}- E_{jj}), \, 1 \le i, j\le n$. For $r=1, \dots, n$, we define the subalgebra $\su(n-1)_r \subset \su(n)$ of the matrices which annihilate the vector $e_r$. By the induction assumption, there exists a quadratic form $Q^r$ on $\su(n-1)_r$ such that $K^r = K_{Q^r}$. We now take $s \ne r$ and similarly construct the corresponding quadratic form $Q^s$ on $\su(n-1)_s$. Their restrictions to the subalgebra $\su(n-2)_{rs} = \su(n-1)_r \cap \su(n-1)_s$ must coincide by Lemma~\ref{l:nosyz}. Next we define a quadratic form $Q$ on $\su(n)$ as follows. For $i < j$ and $k < l$, take any $r \notin \{i,j,k,l\}$ and set $Q(F_{ij}, F_{kl}) = Q^r(F_{ij}, F_{kl})$, where $F_{ab}$ is any of the elements $A_{ab}, B_{ab}$ or $D_{ab}$. Then we extend $Q$ by bilinearity to a quadratic form on the whole algebra $\su(n)$. To verify that the form $Q$ so constructed is well defined, we need to check that our definition agrees with all the linear relations between the elements $D_{ij}$, that is, we need to check that $Q(D_{ij}, F_{lm}) + Q(D_{jk}, F_{lm}) + Q(D_{ki}, F_{lm}) = 0$, for any $5$-tuple $S=(i,j,k,l,m)$. This is satisfied by construction and by the induction assumption when $n-1 \ge 5$ and when $n = 5$, but at least two elements of $S$ are the same. The only case to consider is when $n=5$ and $S'=(1,2,3,4,5)$, say. Then we use the fact that $Q(D_{ij}, F_{45}) = Q(D_{i4}, F_{45}) - Q(D_{j4}, F_{45})$ which holds for $Q$ because it holds for $Q^k$, where $\{i,j,k\} = \{1,2,3\}$.
	
	Thus on every algebra $\h(n) = \so(n), \su(n), \spg(n)$, we have a quadratic form $Q$ such that, by construction, the tensor $K' = K - K_Q \in \sT(\h(n))$ vanishes on any quadruple of elements lying in the same subalgebra $\h(n-1)_r$. We want to prove that $K' = 0$.
	
	In $\h(n)=\spg(n)$, we consider the set $\cC$ of the elements $C_{ij}(q) = q E_{ij} - \overline{q} E_{ji}, \, 1 \le i,j \le n, \, q \in \{1,\ir,\jr,\kr\}$. Clearly, $\cC$ spans $\spg(n)$, and $C_{ij}(q) \in \spg(n-1)_r$, for $r \ne i,j$. Moreover, $[C_{ij}(q), C_{kl}(q')] = 0$ when $\{i,j\} \cap \{k,l\} = \varnothing$. For a quadruple of elements $C_{ab}(q_1), C_{cd}(q_2), C_{ef}(q_3), C_{gh}(q_4)$, denote $S = (a,b,c,d,e,f,g,h)$, and let $|S|$ be the number of distinct elements of $S$. Now $K'(C_{ab}(q_1), C_{cd}(q_2), C_{ef}(q_3), C_{gh}(q_4)) = 0$ by the induction assumption, whenever $|S| < n$. As we always have $|S| \le 8$, this already completes the induction step for $n \ge 9$. When $n = 8$ and $|S| = 8$, then any two elements of the quadruple commute, and the claim of the induction step follows from Lemma~\ref{l:fourcommute}. If $n = 7$ and $|S| = 7$, then in $S$ there is exactly one repeated value, and so there are three elements in the quadruple whose all six subscripts are different. Hence we have, say, $\{a,b\} \cap \{e,f,g,h\} = \varnothing$. But then $[C_{ab}(q_1), C_{ef}(q_3)] = [C_{ab}(q_1), C_{gh}(q_4)] = 0$, and the claim follows by Lemma~\ref{l:threecommute}. Let $n = 6$ and $|S|=6$. Applying Lemma~\ref{l:threecommute} we see that the value of $K'$ is zero on any quadruple, except possibly, on the quadruple $(C_{12}(q_1),C_{36}(q_2),C_{15}(q_3),C_{34}(q_4))$ and on those obtained from it by a permutation of the subscripts $\{1,2,3,4,5,6\}$. For the fixed $q_1,q_2,q_3,q_4$ in this exceptional quadruple, take $x_i,p_i \in \br$ and substitute the vectors $X=x_1 C_{13}(\overline{q_2})+x_2 C_{15}(q_3)+x_3 C_{34}(q_4)$ and $P = p_1 C_{12}(q_1) + p_2 C_{15}(q_3) + p_3 C_{56}(-\overline{q_3})$ in equation~\eqref{eq:K21}. A direct calculation shows that the coefficient of $x_1x_2x_3p_1p_2p_3$ in the resulting equation gives $K'(C_{12}(q_1),C_{36}(q_2),C_{15}(q_3),C_{34}(q_4))=0$, modulo all the terms which vanish by Lemma~\ref{l:threecommute}, which proves the induction step.
	
	The above proof works almost verbatim for the induction step for $\h=\so(n), \, n \ge 6$: one just replaces $C_{ij}(q)$ by $A_{ij}$.
	
	For $\h(n) = \su(n)$, the argument is similar, with some modifications. We consider the set $\cC$ of elements $F_{ij}$, each of which is either $C_{ij}(z) = z E_{ij} - \overline{z} E_{ji},\, z \in \{1,\ir\}$, or $D_{ij} = \ir (E_{ii}-E_{jj})$. Then $\cC$ spans $\su(n)$, and any $F_{ij}$ lies in $\su(n-1)_r$, with $r \ne i,j$. Moreover, $[F_{ij},F_{kl}] = 0$ when $\{i,j\} \cap \{k,l\} = \varnothing$. Repeating the above argument we obtain the induction step for all $n \ge 7$. If $n=6$ and $|S|=6$ for $S=(a,b,c,d,e,f,g,h)$, by Lemma~\ref{l:threecommute} the value of $K'$ is zero on any quadruple $(F_{ab},F_{cd},F_{ef},F_{gh})$, except possibly, on the quadruple $(F_{12},F_{36},F_{15},F_{34})$ and on those obtained from it by a permutation of the subscripts $\{1,2,3,4,5,6\}$. Suppose $F_{ij}=D_{ij}$ for at least one of the elements in this quadruple. Up to a permutation, we can assume that $F_{12}=D_{12}$. Then $K'(F_{12},F_{36},F_{15},F_{34}) = K'(F_{13},F_{36},F_{15},F_{34}) - K'(F_{23},F_{36},F_{15},F_{34}) = 0$, by the induction assumption and by Lemma~\ref{l:threecommute}. Otherwise, we have $K'(C_{12}(z_1),C_{36}(z_2),C_{15}(z_3),C_{34}(z_4)) = 0$, by the same calculation as for $\spg(n)$.
	
	\smallskip
	
	Summarising the above arguments, together with the low-dimensional cases in Remark~\ref{rem:low}, we see that to complete the proof of Theorem~\ref{t:SOSUSp} we need to show that any $K \in \sT(\h)$ has the form $K_Q$ for the following algebras $\h(n)$:
	\begin{equation} \label{eq:remain}
    {
    \renewcommand{\arraystretch}{1.5}
	\begin{array}{|c|c|c|c|c|c|c|}
    \hline
    \h(n) & \so(5) & \su(3) & \su(5) & \spg(3) & \spg(4) & \spg(5) \\
    \hline
    N=\dim \h(n) & 10 & 8 & 24 & 21 & 36 & 55 \\
    \hline
    N_K = \frac{1}{12}N^2 (N^2-1) & 825 & 336 & 27,600 & 16,170 & 139,860 & 762,300 \\
    \hline
    \end{array}
    }
	\end{equation}
where $N_K$ is the dimension of the space of tensors $K$ satisfying the symmetry conditions~\eqref{eq:K2c}, by~\cite[Remark~3]{MN2}.

	This is done in Subsection~\ref{ss:sage} by directly solving the system of linear equations~\eqref{eq:K2c}, \eqref{eq:K21} and~\eqref{eq:K22} for $K$. Recall that to define the quadratic form $Q$ from the quadratic forms $Q^r$ in the induction step, we need $n \ge 5$, and so for $n=3,4$ we will use only Lemma~\ref{l:threecommute} and Remark~\ref{rem:lts}, and some modification of the above induction step.

	\subsection{Computations}
	\label{ss:sage}

The work in this subsection is computer aided.

We need to show that for each algebra $\h(n)$ in~\eqref{eq:remain}, the solution set of equations~\eqref{eq:K2c}, \eqref{eq:K21} and~\eqref{eq:K22} linear in $K$ is $\Span(K_Q \, | \, Q \in \Sym^2 (\h))$, where $K_Q$ is given by~\eqref{eq:KQ}.

By Lemma~\ref{l:nosyz}, the map $Q \mapsto K_Q$ is injective, and so $\dim\Span(K_Q \, | \, Q \in \Sym^2 (\h)) = \frac12 N (N+1)$. It will be therefore sufficient to verify that the corank of the matrix defined by equations~\eqref{eq:K2c}, \eqref{eq:K21} and~\eqref{eq:K22} equals $\frac12 N (N+1)$. While in principle this reduces to computing the rank of a constant real matrix, the ``brute force'' approach is computationally infeasible due to a very large size of that matrix. Indeed, the dimension $N_K$ of the space of tensors $K$ satisfying the symmetry conditions~\eqref{eq:K2c} is given in~\eqref{eq:remain}, and the number of monomials in the components of $X$ and $P$ is $\binom{N+2}{3}^2$ for~\eqref{eq:K21} and $\binom{N+3}{4}^2$ for~\eqref{eq:K22}. These numbers are already large even for the smaller algebras from~\eqref{eq:remain}, and for the algebra $\spg(5)$ are beyond the memory storage for a desktop computer, with calculations taking indefinite time.

To avoid storing a matrix of an enormous size in the computer memory and to be able to perform calculations in reasonable time, we implement the following approach.

\begin{enumerate}[label=(\Roman*),ref=\Roman*]
     \item \label{it:K21only}
     We only use the symmetry conditions~\eqref{eq:K2c} and equations~\eqref{eq:K21}. Equations~\eqref{eq:K2c} and~\eqref{eq:K21} do not in general imply equation~\eqref{eq:K22} (although this is true for rank one spaces~\cite[Corollary~3]{MN2}). Indeed, one can check that the tensor $K_0$ defined by $K_0(X,Y,Z,P)= 2\< X, Y\> \< Z,P\>-\< X,Z\>\< Y,P\>-\< X,P\>\< Y,Z\>$ satisfies both~\eqref{eq:K2c} and~\eqref{eq:K21}, but the difference between the two sides of~\eqref{eq:K22} computed for $K=K_0$ equals $2(\|X\|^2 \|R(X,P)P\|^2-\|P\|^2\|R(P,X)X\|^2)$. This expression is not identically zero already for the symmetric space $S^2 \times \br$, and hence for any symmetric space of which $S^2 \times \br$ is a totally geodesic submanifold, in particular, for any compact irreducible symmetric space of rank at least $2$.

     However, if we find that the corank of the matrix obtained from~\eqref{eq:K2c} and~\eqref{eq:K21} alone is at most $1$ greater than the target one, $\frac12 N (N+1)$, we will be done.

     \item \label{it:Fp}
     All computations below are exact \texttt{SageMath} computations over a finite field $\F_p = \Z/p \Z$, with a large prime $p \equiv 1 \pmod 4$. For each algebra in~\eqref{eq:remain}, we define a basis $\cB$ consisting of matrices whose entries are Gaussian integers, the elements of the ring $\Z[\ir]$. Relative to the inner product $\<X, Y\> = -\frac12 \Tr(XY)$, the basis $\cB$ may not be orthonormal (and not even orthogonal for $\su(3)$ and $\su(5)$). The field $\F_p$ contains an element $\zeta$ with $\zeta^2 = -1$. We define the ring homomorphism $\Xi_p: \Z[\ir] \to \F_p$ by $\Xi_p(\ir) =  \zeta$ and $\Xi_p(a) = a \pmod p$ for $a \in \Z$. With $p$ large enough, the set of matrices $\Xi_p(\cB)$ is still linearly independent, and its Gram matrix relative to the above inner product is non-singular in $\F_p$. We consider equations~\eqref{eq:K2c} and~\eqref{eq:K21} for the elements $K_{IJKL}=K(E_I,E_J,E_K,E_L) \in \F_p$, where $E_I,E_J,E_K,E_L \in \Xi_p(\cB)$. Applying the symmetries~\eqref{eq:K2c} we eliminate some unknowns $K_{IJKL}$ to get exactly $N_K$ of them. Then we obtain a system of linear equations for these unknowns over $\F_p$ by substituting into~\eqref{eq:K21} a large number $N'$ of vectors $X,P \in (\F_p)^N$ all of whose components belong to $\{-2,-1,0,1,2\} \subset \F_p$. The rank $r_p$ of the matrix $M_p$ of that linear system over $\F_p$ is less than or equal to the rank of the original system~\eqref{eq:K21} over $\br$, which gives us the upper bound $N_K - r_p$ for the corank over $\br$. If the latter equals $\frac12 N (N+1) + 1$, the claim follows, as explained in~\eqref{it:K21only}.

     In practice, we take $p=32,749$ and $N'$ in the tens of thousands depending on a particular algebra, and use \texttt{SageMath} packages of finite field arithmetic and sparse matrix rank computation.
\end{enumerate}

These arguments are already sufficient for the algebras $\so(5)$ and $\su(3)$. For $\so(5)$, we choose the basis $\cB$ whose elements are $A_{ij}, \, 1 \le i < j\le 5$, as in Subsection~\ref{ss:algebras}. For $\su(3)$, the basis $\cB$ consists of the six elements $A_{ij} = E_{ij} - E_{ji}, \, B_{ij} = \ir (E_{ij} + E_{ji}), \, 1 \le i < j\le 3$, and two elements $\ir (E_{11} - E_{22})$ and $\ir (E_{22} - E_{33})$, similar to the basis in Subsection~\ref{ss:algebras}. \texttt{SageMath} is extremely effective in finite field arithmetic, and the calculations in both cases take a few minutes on a desktop computer.

By the induction argument in Subsection~\ref{ss:induction}, the claim is already established for all the algebras $\so(n)$, and hence, for $\su(4)$ and $\spg(2)$ -- see Remark~\ref{rem:low}. As the dimension $N_K$ grows rapidly with the dimension of the algebra, for the remaining four algebras in~\eqref{eq:remain} we need a more delicate approach.

\begin{enumerate}[resume,label=(\Roman*),ref=\Roman*]
     \item \label{it:totgeod}
     For every remaining algebra $\h(n)$ in~\eqref{eq:remain}, we can use the fact that for the algebra $\h(n-1)$, the claim is already established. Moreover, when $n = 5$, we can construct a quadratic form $Q$ as in Subsection~\ref{ss:induction} and then define the tensor $K'=K-K_Q$ which is zero on any quadruple of elements belonging to the same subalgebra $\h(5)_r$. This will make a large number of components of $K'$ zero.
\end{enumerate}

Now for the algebra $\su(5)$, we choose the basis $\cB$ consisting of the elements $A_{ij} = E_{ij} - E_{ji}, \, B_{ij} = \ir (E_{ij} + E_{ji}), \, 1 \le i < j\le 5$, and elements $\ir (E_{jj} - E_{j+1,j+1}), \, 1 \le j \le 4$. Following~\eqref{it:totgeod} we construct the tensor $K' \in \sT(\su(5))$ whose value is zero on any quadruple of elements belonging to the subalgebras $\h(5)_r, \, r=1,\dots,5$. Then the \texttt{SageMath} calculation over $\F_p$ implies that $K' = 0$, that is, the rank of the resulting linear system equals the number of the unknowns, from which it follows that $K' = 0$ over $\br$, as well. The calculation took $N'=40 852$ rows.

\smallskip

For the algebras $\spg(n),\, n =3,4,5$, we consider a presentation by $(2n) \times (2n)$ complex matrices coming from the inclusion $\spg(n) \subset \su(2n)$ (rather than by $n \times n$ quaternionic matrices as in Subsection~\ref{ss:algebras}), so that $\spg(n)$ is spanned by the elements
 \begin{equation*}
	X=
	\begin{pmatrix}
		A&B\\
		-\overline B& \overline A
	\end{pmatrix},
	\qquad
	A\in\ug(n),\qquad B\in\Sym^2(n,\bc).
\end{equation*}
We define a basis $\cB$ consisting of the following $(2n) \times (2n)$ matrices over $\Z[\ir]$: $F^1_{ij}=E_{ij}-E_{ji} + E_{n+i,n+j}-E_{n+j,n+i}, \,  F^2_{ij}=\ir (E_{ij}+E_{ji} - E_{n+i,n+j}-E_{n+j,n+i}), \, F^3_{ij}=E_{i,n+j}-E_{n+j,i} - E_{n+i,j}+E_{j,n+i}, \,  F^4_{ij}=\ir (E_{i,n+j}+E_{n+j,i} + E_{n+i,j}+E_{j,n+i})$, for $1 \le i < j \le n$, and $F^2_{ii}=\ir (E_{ii} - E_{n+i,n+i}), \, F^3_{ii}= E_{i,n+i}-E_{n+i,i}, \, F^4_{ii}=\ir (E_{i,n+i} + E_{n+i,i})$, for $1 \le i \le n$. Note that the subalgebra $\spg(n-1)_r \subset \spg(n), \, 1 \le r \le n$, is spanned by the elements $F^a_{ij}, \, a=1,2,3,4$ with $i,j \ne r$.

For $\spg(3)$, we consider the subalgebra $\spg(2)_3$ and construct a quadratic form $Q_3$ on it such that~\eqref{eq:KQpolar} holds for any quadruple of elements from $\spg(2)_3$. Extending $Q_3$ to a quadratic form $Q$ on $\spg(3)$ by taking $Q(F^a_{ij},F^b_{kl}) = 0$ when $3 \in \{i,j,k,l\}$ and defining $K' = K - K_Q \in \sT(\spg(3))$, we need to show that the resulting subspace of tensors $K'$ has dimension $\dim \Sym^2(\spg(3)) - \dim \Sym^2(\spg(2)) = 176$ (in view of Lemma~\ref{l:nosyz}). From Table~\eqref{eq:remain}, there are $16,170-825=15,345$ unknown components of $K'$. Following the approach in~\eqref{it:Fp} we indeed find that the dimension of the subspace obtained by reducing modulo $p$ is at most $176$, as required. The \texttt{SageMath} calculation took $N'=27,445$ sample pairs $(X, P) \in (\{-2,-1,0,1,2\})^{42} \subset (\F_p)^{42}$ in less than an hour.

We use a similar, but refined, approach for $\spg(4)$. For each $r=1,2,3,4$, we construct the corresponding quadratic form $Q_r$ on the subalgebra $\spg(3)_r \subset \spg(4)$. By Lemma~\ref{l:nosyz}, for $r \ne s$, the forms $Q_r$ and $Q_s$ coincide on the pairs of elements belonging to $\spg(2)_{rs} = \spg(3)_r \cap \spg(3)_s$ (note that this argument does not work in the previous case, as $\Sp(1)$ has constant curvature). We can now define a quadratic form $Q$ on $\spg(4)$ by defining its values on the pairs of elements of the basis $\cB$ in such a way that for any $r$, it coincides with the form $Q_r$ on the pairs of elements from $\cB$ belonging to the $\spg(3)_r$, and whose value is zero on the pairs of elements from $\cB$ not belonging to any of $\spg(3)_r$. These latter pairs must be of the form $(F^a_{ij}, F^b_{kl})$, where $\{i,j,k,l\}=\{1,2,3,4\}$ and where $a,b=1,2,3,4$; they span a subspace of dimension $48$ in $\spg(4) \odot \spg(4)$. Replacing the tensor $K$ by the tensor $K'=K-K_Q \in \sT(\spg(4))$ we obtain that $K'$ is zero on any quadruple of elements of $\cB$ lying in the same $Q_r$. Furthermore, from Lemma~\ref{l:threecommute} we obtain that $K'(F^{a_1}_{i_1 j_1}, F^{a_2}_{i_2 j_2}, F^{a_3}_{i_3 j_3}, F^{a_4}_{i_4 j_4}) = 0$, provided the set $\{i_1, j_1\}$ has empty intersection with the set $\{i_2, j_2\} \cup \{i_3, j_3\} \cup \{i_4, j_4\}$. These leaves $56,472$ unknown components of $K'$ before applying~\eqref{eq:K21}. Now we apply~\eqref{it:Fp} taking $N' = 86,313$ sample pairs $(X, P) \in (\{-2, -1, 0, 1, 2\})^{72} \subset (\F_p)^{72}$ to show that the dimension of the solution space over $\F_p$ is at most $48$, as required. The calculation took several hours.

The last algebra to consider is $\spg(5)$. This time, by~\eqref{it:totgeod}, we can construct a quadratic form $Q$ on the whole algebra $\spg(5)$ and the tensor $K'=K-K_Q \in \sT(\spg(5))$ such that the value of $K'$ is zero on any quadruple of elements lying in the same subalgebra $\spg(4)_r \subset \spg(5), \, 1 \le r \le 5$. We need to show that $K'=0$. Similarly to the case of $\spg(4)$, we apply Lemma~\ref{l:threecommute} to kill more components of $K'$ obtaining $33,280$ unknowns (note that this number is smaller than for $\spg(4)$ since now the quadratic form $Q$ is defined on the whole algebra). Applying \texttt{SageMath} $\F_p$ arithmetic as explained in~\eqref{it:Fp} we find that the corank of the matrix obtained from~\eqref{eq:K21} is zero, with $N' = 51,000$ sample pairs $(X, P)$. The calculation took several hours on a desktop computer.

\end{document}